\newtheorem{theorem}{Theorem}
\newtheorem{corollary}[theorem]{Corollary}
\newtheorem{proposition}[theorem]{Proposition}
\newtheorem{remark}[theorem]{Remark}
\newenvironment{proof}[1][Proof]{\noindent\textbf{#1.} }{\ \rule{0.5em}{0.5em}}
\begin{document}

\title{Investigating Exponential and Geometric Polynomials with Euler-Seidel
Algorithm}
\author{Ayhan Dil and Veli Kurt \\
Department of Mathematics, Akdeniz University, 07058 Antalya Turkey\\
adil@akdeniz.edu.tr, vkurt@akdeniz.edu.tr}
\maketitle

\begin{abstract}
In this paper we use Euler-Seidel matrices method to find out some
properties of exponential and geometric polynomials and numbers. Some known
results are reproved and some new results are obtained.

\textbf{2000 Mathematics Subject Classification. }11B73, 11B83

\textbf{Key words: }Euler- Seidel matrices, expoential numbers and
polynomials, geometric numbers and polynomials.
\end{abstract}

\section{Introduction.}

This work is based on Euler-Seidel matrices method $\left( \cite{Dumont}%
\right) $ which is related to algorithms, combinatorics and generating
functions. This method is quite useful to investigate properties of some
special numbers and polynomials.

In this work we use this method to find out some interesting results of
exponential (or Bell) and geometric (or Fubini) polynomials and numbers.
Although some results in this paper are known, this method provides
different proofs as well as new identities.

We first consider a given sequence $(a_{n})_{n\geq 0}$, and then determine
the Euler-Seidel matrix corresponding to this sequence is recursively by the
formulae

\begin{eqnarray}
a_{n}^{0} &=&a_{n}\text{,}\quad (n\geq 0),  \label{1} \\
a_{n}^{k} &=&a_{n}^{k-1}+a_{n+1}^{k-1}\text{,}\quad (n\geq 0,\,k\geq 1) 
\notag
\end{eqnarray}%
where $a_{n}^{k}$ represents the $k$th row and $n$th column entry. From
relation $(\ref{1})$, it can be seen that the first row and the first column
can be transformed into each other via the well known binomial inverse pair
as:%
\begin{equation}
a_{0}^{n}=\sum_{k=0}^{n}\binom{n}{k}a_{k}^{0}\text{,}  \label{2}
\end{equation}%
and%
\begin{equation}
a_{n}^{0}=\sum_{k=0}^{n}\binom{n}{k}(-1)^{n-k}a_{0}^{k}.  \label{2+}
\end{equation}

Euler $\left( \cite{Euler}\right) $ deduced the following proposition which
states a connection between the ordinary generating functions of the initial
sequence $(a_{n})_{n\geq 0}=(a_{n}^{0})_{n\geq 0}$ and the first column $%
(a_{0}^{n})_{n\geq 0}$.

\begin{proposition}[Euler]
Let 
\begin{equation*}
a(t)=\sum_{n=0}^{\infty }a_{n}^{0}t^{n}
\end{equation*}%
be the generating function of the initial sequence $(a_{n}^{0})_{n\geq 0}$.
Then the generating function of the sequence $(a_{0}^{n})_{n\geq 0}$ is 
\begin{equation}
\overline{a}(t)=\sum_{n=0}^{\infty }a_{0}^{n}t^{n}=\frac{1}{1-t}a\left( 
\frac{t}{1-t}\right) .  \label{3}
\end{equation}
\end{proposition}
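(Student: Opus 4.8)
The plan is to start from the binomial relation $(\ref{2})$, substitute it into the definition of $\overline{a}(t)$, and then reorganize the resulting double sum so that the inner summation collapses to a closed form. First I would write
\[
\overline{a}(t)=\sum_{n=0}^{\infty}a_{0}^{n}t^{n}=\sum_{n=0}^{\infty}\left(\sum_{k=0}^{n}\binom{n}{k}a_{k}^{0}\right)t^{n},
\]
treating every series as a formal power series so that no convergence issues arise.

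The key manipulation is to interchange the two summations. Since $\binom{n}{k}=0$ whenever $k>n$, the double sum ranges over all pairs with $0\le k\le n$, and swapping the order gives
\[
\overline{a}(t)=\sum_{k=0}^{\infty}a_{k}^{0}\sum_{n=k}^{\infty}\binom{n}{k}t^{n}.
\]
At this stage I would invoke the standard negative-binomial generating-function identity $\sum_{n=k}^{\infty}\binom{n}{k}t^{n}=\frac{t^{k}}{(1-t)^{k+1}}$. This can be justified either by the substitution $n=k+j$ together with $\binom{k+j}{k}=\binom{k+j}{j}$ and the binomial series for $(1-t)^{-(k+1)}$, or by differentiating the geometric series $\sum_{n\ge 0}t^{n}=1/(1-t)$ exactly $k$ times.

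Substituting this closed form then yields
\[
\overline{a}(t)=\sum_{k=0}^{\infty}a_{k}^{0}\,\frac{t^{k}}{(1-t)^{k+1}}=\frac{1}{1-t}\sum_{k=0}^{\infty}a_{k}^{0}\left(\frac{t}{1-t}\right)^{k}=\frac{1}{1-t}\,a\!\left(\frac{t}{1-t}\right),
\]
which is precisely $(\ref{3})$. I expect the only genuinely delicate point to be the justification of the interchange of summations; working in the ring of formal power series, or equivalently observing that only finitely many terms contribute to each fixed power of $t$, removes this concern entirely. The recollection of the binomial-series identity is routine, so the whole argument should be quite short.
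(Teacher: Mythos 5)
Your proposal is correct, and every step is sound: the interchange of summations is valid coefficientwise for formal power series, the identity $\sum_{n\ge k}\binom{n}{k}t^{n}=t^{k}/(1-t)^{k+1}$ is the standard negative-binomial series, and the final resummation gives exactly $(\ref{3})$. There is, however, nothing in the paper to compare your argument against: the proposition is quoted as Euler's classical result, with a citation to \cite{Euler}, and no proof is given in the paper itself. One point worth flagging is that your argument rests entirely on relation $(\ref{2})$, which the paper likewise asserts without proof (``it can be seen''); to make your proof genuinely self-contained you should add the short induction on $k$ from the recurrence $(\ref{1})$ showing $a_{n}^{k}=\sum_{j=0}^{k}\binom{k}{j}a_{n+j}^{0}$, whose $n=0$ case is precisely $(\ref{2})$. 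With that one-line supplement, your argument is a complete and correct proof of Euler's proposition.
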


A similar statement was proved by Seidel $\left( \cite{Seidel}\right) $ with
respect to the exponential generating function.

\begin{proposition}[Seidel]
\label{P}Let 
\begin{equation*}
A(t)=\sum_{n=0}^{\infty }a_{n}^{0}\frac{t^{n}}{n!}
\end{equation*}%
be the exponential generating function of the initial sequence $%
(a_{n}^{0})_{n\geq 0}$. Then the exponential generating function of the
sequence $(a_{0}^{n})_{n\geq 0}$ is 
\begin{equation}
\overline{A}(t)=\sum_{n=0}^{\infty }a_{0}^{n}\frac{t^{n}}{n!}=e^{t}A(t).
\label{4}
\end{equation}
\end{proposition}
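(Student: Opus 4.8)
The plan is to start directly from the definition of $\overline{A}(t)$ and substitute the binomial-transform formula $(\ref{2})$, which already expresses each first-column entry $a_{0}^{n}$ in terms of the initial sequence. First I would write
\begin{equation*}
\overline{A}(t)=\sum_{n=0}^{\infty }a_{0}^{n}\frac{t^{n}}{n!}=\sum_{n=0}^{\infty }\left( \sum_{k=0}^{n}\binom{n}{k}a_{k}^{0}\right) \frac{t^{n}}{n!},
\end{equation*}
and then expand the binomial coefficient as $\binom{n}{k}=\frac{n!}{k!\,(n-k)!}$ so that the factor $n!$ cancels, leaving the double sum $\sum_{n\geq 0}\sum_{k=0}^{n}\frac{a_{k}^{0}}{k!\,(n-k)!}t^{n}$.

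The key step is to recognize this double sum as a Cauchy product. Setting $j=n-k$, the constraint $0\leq k\leq n$ becomes simply $k\geq 0$ and $j\geq 0$, so the summation decouples into
\begin{equation*}
\overline{A}(t)=\sum_{k=0}^{\infty }\sum_{j=0}^{\infty }\frac{a_{k}^{0}}{k!}\frac{t^{k+j}}{j!}=\left( \sum_{k=0}^{\infty }a_{k}^{0}\frac{t^{k}}{k!}\right) \left( \sum_{j=0}^{\infty }\frac{t^{j}}{j!}\right) =A(t)\,e^{t},
\end{equation*}
which is exactly $(\ref{4})$. Equivalently, one may run this argument backwards: the coefficient of $t^{n}/n!$ in the product $e^{t}A(t)$ is, by the rule for multiplying exponential generating functions, precisely $\sum_{k=0}^{n}\binom{n}{k}a_{k}^{0}$, and this equals $a_{0}^{n}$ by $(\ref{2})$.

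The only point needing care is the rearrangement of the double series when passing from the triangular index set $\{(n,k):0\leq k\leq n\}$ to the product set $\{(k,j):k,j\geq 0\}$. Treating the identity at the level of formal power series, this rearrangement is automatic, since only finitely many terms contribute to each power of $t$; if instead one insists on genuine convergence, it is justified by absolute convergence of $A(t)$ and $e^{t}$ inside their common disk of convergence. I do not expect a serious obstacle here, as the whole content of the statement is the observation that the binomial transform $(\ref{2})$ is the coefficient form of multiplication by $e^{t}$.
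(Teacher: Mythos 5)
Your proof is correct. There is, however, nothing in the paper to compare it against: the paper does not prove this proposition at all, but states it as a classical result of Seidel and cites the original 1877 paper. Your route---substituting the binomial transform $(\ref{2})$ into the definition of $\overline{A}(t)$, expanding $\binom{n}{k}$, and recognizing the triangular double sum as the Cauchy product $A(t)\,e^{t}$---is the standard argument, and your remark that the rearrangement is automatic at the level of formal power series (or justified by absolute convergence otherwise) handles the only delicate point. The one dependency worth flagging is that $(\ref{2})$ is itself only asserted in the paper, as something that ``can be seen'' from the recurrence $(\ref{1})$; a fully self-contained proof would add the short induction on $k$ showing $a_{n}^{k}=\sum_{j=0}^{k}\binom{k}{j}a_{n+j}^{0}$, of which $(\ref{2})$ is the case $n=0$. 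Granting $(\ref{2})$, as the paper's exposition entitles you to do, your argument is complete.
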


Dumont $\left( \cite{Dumont}\right) $, presented several examples of Euler-\
Seidel matrices mainly\ using Bernoulli, Euler, Genocchi, exponential (Bell)
and tangent numbers. He also attempted to give the polynomial extension of
Euler- Seidel matrices method. In \cite{Diletal}, Dil et al obtained some
identities on Bernoulli and allied polynomials by introducing polynomial
extension of this matrices method. By choosing the initial sequence from the
elements of $\mathbb{Z}_{p}$ ($p$ is prime), Dil and Kurt $\left( \cite{DK}%
\right) $ were interested in the type of Euler-Seidel matrices on $\mathbb{Z}%
_{p}$. \cite{MD} contains detailed study on harmonic and hyperharmonic
numbers using Euler-\ Seidel matrices method. More than that the reader also
can find in \cite{MD}, some results on $r-$Stirling numbers and a new
characterization of the Fibonacci sequence. In \cite{DM},\ Dil and Mez\H{o}%
,\ presented another algorithm which depends on a recurrence relation and
two initial sequences. Using the algorithm which is symmetric respect to the
rows and columns, they obtained some relations between Lucas sequences and
incomplete Lucas sequences. Besides Fibonacci and Lucas numbers they also
investigated hyperharmonic numbers.

In this paper we consider Euler- Seidel matrices method for some
combinatorial numbers and polynomials. This method is relatively easier than
most of combinatorial methods to investigate the structure of such numbers
and polynomials.

\section{Definitions and Notation.}

Now we give a summary about some special numbers and polynomials which we
will need later.

\textbf{Stirling numbers of the second kind.}

Stirling numbers of the second kind\ $%
\begin{Bmatrix}
n \\ 
k%
\end{Bmatrix}%
$ are defined by means of generating functions as follows $\left( \cite{AS,C}%
\right) $:%
\begin{equation}
\sum_{n=0}^{\infty }%
\begin{Bmatrix}
n \\ 
k%
\end{Bmatrix}%
\frac{x^{n}}{n!}=\frac{\left( e^{x}-1\right) ^{k}}{k!}.  \label{5}
\end{equation}

\textbf{Exponential polynomials.}

Exponential polynomials (or single variable Bell polynomials) $\phi
_{n}\left( x\right) $ are defined by $(\cite{BL1, Ri, R})$ as%
\begin{equation}
\phi _{n}\left( x\right) :=\sum_{k=0}^{n}\QATOPD\{ \} {n}{k}x^{k}.  \label{8}
\end{equation}

We refer $\cite{B2}$\ for comprehensive information on exponential
polynomials.

The first few exponential polynomials are:%
\begin{equation}
\begin{tabular}{|l|}
\hline
$\phi _{0}\left( x\right) =1\text{,}$ \\ \hline
$\phi _{1}\left( x\right) =x\text{,}$ \\ \hline
$\phi _{2}\left( x\right) =x+x^{2}\text{,}$ \\ \hline
$\phi _{3}\left( x\right) =x+3x^{2}+x^{3}\text{,}$ \\ \hline
$\phi _{4}\left( x\right) =x+7x^{2}+6x^{3}+x^{4}\text{.}$ \\ \hline
\end{tabular}
\label{L1}
\end{equation}

Exponential generating function of exponential polynomials is given by $%
\left( \cite{C}\right) $,%
\begin{equation}
\sum_{n=0}^{\infty }\phi _{n}\left( x\right) \frac{t^{n}}{n!}=e^{x\left(
e^{t}-1\right) }.  \label{7}
\end{equation}

The well known exponential numbers (or Bell numbers) $\phi _{n}$ $\left( 
\cite{BL2, C, CG, T}\right) $\ are obtained by setting $x=1$ in $\left( \ref%
{8}\right) $,$\,$i.e%
\begin{equation}
\phi _{n}:=\phi _{n}\left( 1\right) =\sum_{k=0}^{n}\QATOPD\{ \} {n}{k}.
\label{6}
\end{equation}%
The first few exponential numbers are:%
\begin{equation}
\phi _{0}=1\text{, }\phi _{1}=1\text{, }\phi _{2}=2\text{, \ }\phi _{3}=5%
\text{, \ }\phi _{4}=15\text{.}  \label{L2}
\end{equation}

Readers might also consult the lengthy bibliography of Gould $\left( \cite%
{GU}\right) $, where several papers and books are listed about exponential
numbers.

Following recurrence relations that we reprove with Euler- Seidel matrices
method hold for exponential polynomials $\left( \cite{R}\right) $,%
\begin{equation}
\phi _{n+1}\left( x\right) =x\left( \phi _{n}\left( x\right) +\phi
_{n}^{^{\prime }}\left( x\right) \right)  \label{10}
\end{equation}%
and%
\begin{equation}
\phi _{n+1}\left( x\right) =x\sum_{k=0}^{n}\binom{n}{k}\phi _{k}\left(
x\right) .  \label{11}
\end{equation}

\textbf{Geometric polynomials and numbers.}

Geometric polynomials (also known as Fubini polynomials) are defined as
follows $\left( \cite{B}\right) $:%
\begin{equation}
F_{n}\left( x\right) =\sum_{k=0}^{n}%
\begin{Bmatrix}
n \\ 
k%
\end{Bmatrix}%
k!x^{k}.  \label{13}
\end{equation}%
By setting $x=1$ in $\left( \ref{13}\right) $ we obtain geometric (or
preferential arrangement-, or Fubini-) numbers $F_{n}$, $\left( \cite{Da, Gr}%
\right) $ as%
\begin{equation}
F_{n}:=F_{n}\left( 1\right) =\sum_{k=0}^{n}%
\begin{Bmatrix}
n \\ 
k%
\end{Bmatrix}%
k!.  \label{14}
\end{equation}

The exponential generating functions of geometric polynomials is given by $%
\left( \cite{B}\right) $%
\begin{equation}
\frac{1}{1-x\left( e^{t}-1\right) }=\sum_{n=0}^{\infty }F_{n}\left( x\right) 
\frac{t^{n}}{n!}.  \label{15}
\end{equation}

Let us give a short list of these polynomials and numbers as follows%
\begin{equation*}
\begin{tabular}{|l|}
\hline
$F_{0}\left( x\right) =1\text{,}$ \\ \hline
$F_{1}\left( x\right) =x\text{,}$ \\ \hline
$F_{2}\left( x\right) =x+2x^{2}\text{,}$ \\ \hline
$F_{3}\left( x\right) =x+6x^{2}+6x^{3}\text{,}$ \\ \hline
$F_{4}\left( x\right) =x+14x^{2}+36x^{3}+24x^{4}\text{.}$ \\ \hline
\end{tabular}%
\end{equation*}%
and

\begin{equation*}
F_{0}=1\text{, \ }F_{1}=1\text{, \ }F_{2}=3\text{, \ }F_{3}=13\text{, \ }%
F_{4}=75\text{.}
\end{equation*}

Geometric and exponential polynomials are connected in \cite{B} by the
relation%
\begin{equation}
F_{n}\left( x\right) =\int_{0}^{\infty }\phi _{n}\left( x\lambda \right)
e^{-\lambda }d\lambda .  \label{16}
\end{equation}

Now we will state our results.

\section{Results obtained by matrix method.}

Although we define Euler-Seidel matrices as matrices of numbers, we can
consider entries of these matrices as polynomials also $\left( \text{see }%
\cite{Diletal}\right) $. Thus, the generating functions that we mention in
the statement of Seidel's proposition, turn out to be two variables
generating functions. Therefore from now on when we consider these
generating functions as exponential generating functions of numbers we use
the notations $A\left( t\right) $ and $\overline{A}\left( t\right) $,
otherwise for the polynomial case we use the notations $A\left( t,x\right) $
and $\overline{A}\left( t,x\right) $. Using these notations relation $\left( %
\ref{4}\right) $ becomes%
\begin{equation}
\overline{A}\left( t,x\right) =e^{t}A\left( t,x\right) .  \label{16'}
\end{equation}

\subsection{Results on Stirling numbers of the second kind.}

Setting the initial sequence of an Euler-Seidel matrix as the sequence of
Stirling numbers of the second kind, i.e. $a_{n}^{0}=%
\begin{Bmatrix}
n \\ 
m%
\end{Bmatrix}%
$ where $m$ is a fixed nonnegative integer, we get the exponential
generating function of the first row as%
\begin{equation*}
A\left( t\right) =\frac{\left( e^{t}-1\right) ^{m}}{m!}.
\end{equation*}%
Thus we obtain from $\left( \ref{2}\right) $ the result%
\begin{equation}
a_{0}^{n}=\sum_{k=0}^{n}\binom{n}{k}%
\begin{Bmatrix}
k \\ 
m%
\end{Bmatrix}%
.  \label{17'}
\end{equation}%
Now we proceed as follows; by using Proposition $\left( \ref{P}\right) $ we
obtain another form of "$a_{0}^{n}$" and then we combine it with $\left( \ref%
{17'}\right) $.

Equation $\left( \ref{4}\right) $ yields%
\begin{equation*}
\overline{A}\left( t\right) =\sum_{n=0}^{\infty }a_{0}^{n}\frac{t^{n}}{n!}%
=e^{t}\frac{\left( e^{t}-1\right) ^{m}}{m!}
\end{equation*}%
which can equally well be written%
\begin{equation}
\overline{A}\left( t\right) =\frac{d}{dt}\frac{\left( e^{t}-1\right) ^{m+1}}{%
\left( m+1\right) !}.  \label{17''}
\end{equation}%
Comparison of coefficients of $t^{n}$ in equation $\left( \ref{17''}\right) $
yields the following result:%
\begin{equation}
a_{0}^{n}=%
\begin{Bmatrix}
n+1 \\ 
m+1%
\end{Bmatrix}%
.  \label{18}
\end{equation}%
Thus we obtain the following proposition.

\begin{proposition}
Let $m$ be a nonnegative integer then we have,
\end{proposition}

\begin{equation}
\sum_{k=0}^{n}\binom{n}{k}%
\begin{Bmatrix}
k \\ 
m%
\end{Bmatrix}%
=%
\begin{Bmatrix}
n+1 \\ 
m+1%
\end{Bmatrix}
\label{19}
\end{equation}%
and%
\begin{equation}
\sum_{k=0}^{n}\binom{n}{k}\left( -1\right) ^{n-k}%
\begin{Bmatrix}
k+1 \\ 
m+1%
\end{Bmatrix}%
=%
\begin{Bmatrix}
n \\ 
m%
\end{Bmatrix}%
.  \label{20}
\end{equation}

\begin{proof}
The equalities $\left( \ref{17'}\right) $ and $\left( \ref{18}\right) $
constitute the first result stated in proposition. Then equation $\left( \ref%
{20}\right) $ directly follows\ from $\left( \ref{2+}\right) $ and $\left( %
\ref{19}\right) $.
\end{proof}

Relations $\left( \ref{19}\right) $\ and $\left( \ref{20}\right) $\ can be
found in \cite{GKP}, respectively as the equations (6.15) and (6.17) on page
265.

\subsection{Results on exponential polynomials\ and numbers.}

\subsubsection{Exponential numbers.}

Let us construct an Euler-Seidel matrix with the initial sequence $\left(
a_{n}^{0}\right) _{n\geq 0}=\left( \phi _{n}\right) _{n\geq 0}$. Then we get
the following Euler-Seidel matrix

\begin{equation*}
\left[ 
\begin{array}{ccccccc}
1 & 1 & 2 & 5 & 15 & 52 & \cdots \\ 
2 & 3 & 7 & 20 & \cdots &  &  \\ 
5 & 10 & 27 & \cdots &  &  &  \\ 
15 & 37 & \cdots &  &  &  &  \\ 
52 & \cdots &  &  &  &  &  \\ 
\cdots &  &  &  &  &  & 
\end{array}%
\right]
\end{equation*}%
From this matrix we observe that $a_{0}^{n}=$ $\phi _{n+1}$. Now we prove
this observation using generating functions.

$\left( a_{n}^{0}\right) _{n\geq 0}=\left( \phi _{n}\right) _{n\geq 0}$,
from which it follows that%
\begin{equation*}
A\left( t\right) =e^{e^{t}-1}.
\end{equation*}%
Equation $\left( \ref{4}\right) $ permits us to write%
\begin{equation}
\overline{A}\left( t\right) =e^{e^{t}+t-1}=\frac{d}{dt}\left(
e^{e^{t}-1}\right) =\sum_{n=0}^{\infty }\phi _{n+1}\frac{t^{n}}{n!}.
\label{20'}
\end{equation}%
Comparision of the coefficients of both sides in $\left( \ref{20'}\right) $
gives the desired result%
\begin{equation}
a_{0}^{n}=\phi _{n+1}.  \label{20+}
\end{equation}

This leads to the following proposition.

\begin{proposition}
\label{B}We have%
\begin{equation}
\phi _{n+1}=\sum_{k=0}^{n}\binom{n}{k}\phi _{k}  \label{21}
\end{equation}%
and%
\begin{equation}
\phi _{n}=\sum_{k=0}^{n}\binom{n}{k}\left( -1\right) ^{n-k}\phi _{k+1}.
\label{22}
\end{equation}
\end{proposition}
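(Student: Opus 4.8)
The plan is to read off both identities from the binomial inverse pair $\left( \ref{2}\right) $ and $\left( \ref{2+}\right) $, once the two descriptions of the first column $a_{0}^{n}$ are in hand. This mirrors exactly the argument used for the Stirling-number proposition above, where $\left( \ref{19}\right) $ and $\left( \ref{20}\right) $ were obtained in the same way.

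Recall that the initial sequence of the matrix is $a_{n}^{0}=\phi _{n}$, so that by the forward relation $\left( \ref{2}\right) $ the first column satisfies
\[
a_{0}^{n}=\sum_{k=0}^{n}\binom{n}{k}a_{k}^{0}=\sum_{k=0}^{n}\binom{n}{k}\phi _{k}.
\]
On the other hand, the generating-function computation leading to $\left( \ref{20+}\right) $ has already shown $a_{0}^{n}=\phi _{n+1}$. Equating these two expressions for $a_{0}^{n}$ gives $\left( \ref{21}\right) $ at once.

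For $\left( \ref{22}\right) $ I would invoke the inverse relation $\left( \ref{2+}\right) $, which recovers the first row from the first column,
\[
a_{n}^{0}=\sum_{k=0}^{n}\binom{n}{k}\left( -1\right) ^{n-k}a_{0}^{k},
\]
and substitute $a_{n}^{0}=\phi _{n}$ on the left together with $a_{0}^{k}=\phi _{k+1}$ on the right, which produces $\left( \ref{22}\right) $ immediately.

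Since $\left( \ref{2}\right) $ and $\left( \ref{2+}\right) $ are genuinely inverse to one another, the only substantive ingredient is the evaluation $a_{0}^{n}=\phi _{n+1}$, and that has already been secured by the derivative identity $\overline{A}\left( t\right) =\frac{d}{dt}\left( e^{e^{t}-1}\right) $ of $\left( \ref{20'}\right) $. Consequently I expect no real obstacle here: both identities follow by a direct comparison, just as in the Stirling case.
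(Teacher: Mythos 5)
Your proposal is correct and follows essentially the same route as the paper: the paper likewise obtains $\left( \ref{21}\right)$ by combining $\left( \ref{2}\right)$ with the generating-function evaluation $\left( \ref{20+}\right)$, and then gets $\left( \ref{22}\right)$ from the inverse relation $\left( \ref{2+}\right)$. Your direct substitution of $a_{n}^{0}=\phi _{n}$ and $a_{0}^{k}=\phi _{k+1}$ into $\left( \ref{2+}\right)$ is just a slightly more explicit phrasing of the paper's one-line argument.
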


\begin{proof}
Equation $\left( \ref{21}\right) $\ follows from $\left( \ref{2}\right) $
and $\left( \ref{20+}\right) $. Hence considering $\left( \ref{2+}\right) $
and $\left( \ref{21}\right) $ together we obtain $\left( \ref{22}\right) $.
\end{proof}

The identity $\left( \ref{21}\right) $ can be found in \cite{GKP} on page
373 and $\left( \ref{22}\right) $ is inverse binomial transform of identity $%
\left( \ref{21}\right) $.

\subsubsection{Exponential polynomials.}

Setting the initial sequence of an Euler-Seidel matrix as the sequence of
exponential polynomials, i.e. $\left( a_{n}^{0}\right) _{n\geq 0}=\left(
\phi _{n}\left( x\right) \right) _{n\geq 0}$ we get following Euler- Seidel
matrix,

\begin{equation*}
\left[ 
\begin{array}{ccccc}
1 & x & x+x^{2} & x+3x^{2}+x^{3} & \cdots \\ 
1+x & 2x+x^{2} & 2x+4x^{2}+x^{3} & \cdots &  \\ 
1+3x+x^{2} & 4x+5x^{2}+x^{3} & \cdots &  &  \\ 
1+7x+6x^{2}+x^{3} & \cdots &  &  &  \\ 
\cdots &  &  &  & 
\end{array}%
\right]
\end{equation*}%
It seems that, $xa_{0}^{n}=\phi _{n+1}\left( x\right) $. Now we prove this
fact.

With the aid of Proposition $\left( \ref{P}\right) $, we can write%
\begin{equation}
\overline{A}\left( t,x\right) =e^{t}e^{x\left( e^{t}-1\right) }=\frac{1}{x}%
\frac{d}{dt}e^{x\left( e^{t}-1\right) }.  \label{22'}
\end{equation}%
Comparision of the coefficients of the both sides in $\left( \ref{22'}%
\right) $ gives the desired result:%
\begin{equation}
xa_{0}^{n}=\phi _{n+1}\left( x\right)  \label{22+}
\end{equation}%
from which the next proposition follows.

\begin{proposition}
We have
\end{proposition}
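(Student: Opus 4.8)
The plan is to mirror the argument already used for Proposition \ref{B}, now carrying the extra variable $x$ through the computation. First I would invoke relation $(\ref{2})$ applied to the initial sequence $a_{k}^{0}=\phi _{k}\left( x\right) $, which immediately gives
\begin{equation*}
a_{0}^{n}=\sum_{k=0}^{n}\binom{n}{k}\phi _{k}\left( x\right) .
\end{equation*}
This is the purely combinatorial expression for the first-column entries, obtained with no reference to the generating function.

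Next I would combine this with the generating-function result $(\ref{22+})$, namely $xa_{0}^{n}=\phi _{n+1}\left( x\right) $. Substituting the display above for $a_{0}^{n}$ and multiplying through by $x$ yields
\begin{equation*}
\phi _{n+1}\left( x\right) =x\sum_{k=0}^{n}\binom{n}{k}\phi _{k}\left( x\right) ,
\end{equation*}
which is precisely relation $(\ref{11})$, now reproved by the matrix method. For the companion identity I would invoke the binomial inversion $(\ref{2+})$, which reads $a_{n}^{0}=\sum_{k=0}^{n}\binom{n}{k}\left( -1\right) ^{n-k}a_{0}^{k}$. Writing $a_{n}^{0}=\phi _{n}\left( x\right) $ and replacing each $a_{0}^{k}$ by $\phi _{k+1}\left( x\right) /x$ from $(\ref{22+})$, then clearing the denominator, produces
\begin{equation*}
x\phi _{n}\left( x\right) =\sum_{k=0}^{n}\binom{n}{k}\left( -1\right) ^{n-k}\phi _{k+1}\left( x\right) ,
\end{equation*}
which is the inverse binomial transform of the first identity.

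I do not anticipate a genuine obstacle here: once $(\ref{22+})$ is in hand, both identities are formal consequences of the two binomial-transform relations $(\ref{2})$ and $(\ref{2+})$, exactly as in the Bell-number case. The one point that demands care is the bookkeeping of the factor $x$. Because $x$ enters multiplicatively in $(\ref{22+})$ rather than additively, it must be cleared consistently whenever one passes between the column entries $a_{0}^{n}$ and the shifted polynomials $\phi _{n+1}\left( x\right) $; a careless cancellation would corrupt the constant term, so I would verify the low-order cases against the explicit table in the excerpt as a sanity check. Beyond that, the entire derivation is a direct specialization of the scheme already carried out for the exponential numbers.
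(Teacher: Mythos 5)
Your proposal is correct and follows exactly the paper's argument: the paper proves this proposition by remarking that the proof is the same as for Proposition~\ref{B}, i.e.\ relation $(\ref{23'})$ follows from combining $(\ref{2})$ with $(\ref{22+})$, and relation $(\ref{24})$ then follows from $(\ref{2+})$, which is precisely your two-step derivation with the factor of $x$ handled correctly.
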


\begin{equation}
\phi _{n+1}\left( x\right) =x\sum_{k=0}^{n}\binom{n}{k}\phi _{k}\left(
x\right)  \label{23'}
\end{equation}
and%
\begin{equation}
x\phi _{n}\left( x\right) =\sum_{k=0}^{n}\binom{n}{k}\left( -1\right)
^{n-k}\phi _{k+1}\left( x\right) .  \label{24}
\end{equation}

\begin{proof}
Proof is like that of Proposition $\left( \ref{B}\right) $.
\end{proof}

Here we give a new proof of the equation $\left( \ref{11}\right) $ by using
Euler- Seidel matrices method.

It is clear that equations $\left( \ref{23'}\right) $ and $\left( \ref{24}%
\right) $ are the generalizations of equations $\left( \ref{21}\right) $ and 
$\left( \ref{22}\right) ,$\ respectively.

Now with the help of generating functions technique we derive some relations
for exponential polynomials.

Firstly we give a new proof of the equation $\left( \ref{10}\right) .$

\begin{proposition}
Let $\phi _{n}^{^{\prime }}\left( x\right) $ denote the derivative of the $n$%
th exponential polynomial $\phi _{n}\left( x\right) ,$ with respect to the
variable $x$. Then the equation%
\begin{equation}
\phi _{n+1}\left( x\right) =x\left( \phi _{n}\left( x\right) +\phi
_{n}^{^{\prime }}\left( x\right) \right)  \label{25'}
\end{equation}%
holds.
\end{proposition}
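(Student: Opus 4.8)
The plan is to work directly with the exponential generating function $\left( \ref{7}\right) $, writing $G\left( t,x\right) =\sum_{n=0}^{\infty }\phi _{n}\left( x\right) \frac{t^{n}}{n!}=e^{x\left( e^{t}-1\right) }$, and to extract the recurrence by comparing the two first-order partial derivatives of $G$. First I would differentiate $G$ with respect to $t$, obtaining $\frac{\partial G}{\partial t}=xe^{t}e^{x\left( e^{t}-1\right) }=xe^{t}G$, and separately differentiate with respect to $x$, obtaining $\frac{\partial G}{\partial x}=\left( e^{t}-1\right) e^{x\left( e^{t}-1\right) }=\left( e^{t}-1\right) G$.

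The key step is to eliminate the factor $e^{t}$ between these two expressions. Since $xe^{t}=x+x\left( e^{t}-1\right) $, the first equation rearranges to $\frac{\partial G}{\partial t}=xG+x\left( e^{t}-1\right) G=xG+x\frac{\partial G}{\partial x}$. This single partial differential equation for $G$ already encodes the desired relation, and it is the genuinely illuminating step of the argument.

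Next I would read off the coefficient of $\frac{t^{n}}{n!}$ on each side. On the left, term-by-term differentiation gives $\frac{\partial G}{\partial t}=\sum_{n=0}^{\infty }\phi _{n+1}\left( x\right) \frac{t^{n}}{n!}$; on the right, since $\frac{\partial }{\partial x}$ acts coefficientwise as $\phi _{n}\left( x\right) \mapsto \phi _{n}^{\prime }\left( x\right) $, we have $xG+x\frac{\partial G}{\partial x}=\sum_{n=0}^{\infty }x\left( \phi _{n}\left( x\right) +\phi _{n}^{\prime }\left( x\right) \right) \frac{t^{n}}{n!}$. Matching coefficients then yields $\left( \ref{25'}\right) $ at once.

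I do not expect a serious obstacle here. The only point requiring a little care is the formal justification for differentiating the power series term by term and for interchanging the $x$-derivative with the summation, which is immediate either in the ring of formal power series in $t$ or within the radius of convergence. The substantive content is entirely in the algebraic identity $xe^{t}=x+x\left( e^{t}-1\right) $, which converts the $t$-derivative of $G$ into the combination $xG+x\frac{\partial G}{\partial x}$ that mirrors the right-hand side of the claimed recurrence.
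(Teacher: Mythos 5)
Your proposal is correct and follows essentially the same route as the paper: the paper likewise differentiates the generating function $A(t,x)=e^{x\left(e^{t}-1\right)}$ with respect to $x$, writes the result as $e^{t}A(t,x)-A(t,x)=\overline{A}(t,x)-A(t,x)$, and compares coefficients, where the identification of the coefficients of $\overline{A}(t,x)$ as $\phi_{n+1}(x)/x$ comes from the $t$-derivative identity $\left(\ref{22'}\right)$ --- which is precisely your computation $\partial G/\partial t=xe^{t}G$. The only difference is bookkeeping: you inline that identification as the single relation $\partial G/\partial t=xG+x\,\partial G/\partial x$, whereas the paper phrases the same elimination through the Euler--Seidel quantities $A$, $\overline{A}$ and the previously established $\left(\ref{22+}\right)$.
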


\begin{proof}
Deriving both sides of the equation $\left( \ref{7}\right) $ respect to the $%
x$ we get%
\begin{equation*}
\sum_{n=0}^{\infty }\phi _{n}^{^{\prime }}\left( x\right) \frac{t^{n}}{n!}%
=e^{t}e^{x\left( e^{t}-1\right) }-e^{x\left( e^{t}-1\right) }
\end{equation*}%
which combines with $\left( \ref{22'}\right) $ to give%
\begin{equation*}
\sum_{n=0}^{\infty }\phi _{n}^{^{\prime }}\left( x\right) \frac{t^{n}}{n!}=%
\overline{A}\left( t,x\right) -A\left( t,x\right) .
\end{equation*}%
Then the last equation gives the desired result by comparing coefficients.
\end{proof}

\begin{corollary}
Exponential polynomials and their derivatives satisfy following symmetric
equation%
\begin{equation}
\sum_{k=0}^{n-1}\binom{n}{k}\left( -1\right) ^{k}\phi _{k}\left( x\right)
=\sum_{k=1}^{n}\binom{n}{k}\left( -1\right) ^{k-1}\phi _{k}^{^{\prime
}}\left( x\right) .  \label{26}
\end{equation}
\end{corollary}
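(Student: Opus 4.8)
The plan is to build on the explicit derivative formula that emerges inside the proof of the preceding proposition. There, comparing coefficients of $t^{n}/n!$ in $\sum_{n\geq 0}\phi_{n}^{\prime}(x)\,t^{n}/n!=\overline{A}(t,x)-A(t,x)$, and using $a_{0}^{n}=\sum_{k=0}^{n}\binom{n}{k}\phi_{k}(x)$ from $(\ref{2})$ together with the fact that $\phi_{n}(x)$ is the coefficient of $t^{n}/n!$ in $A(t,x)$, one obtains
\begin{equation*}
\phi_{n}^{\prime}(x)=\sum_{k=0}^{n-1}\binom{n}{k}\phi_{k}(x).\tag{$\ast$}
\end{equation*}
I would record $(\ast)$ first, since it is the engine of the whole argument.

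Next I would attack the right-hand side of $(\ref{26})$ directly. Substituting $(\ast)$ (with $k$ in place of $n$ and $j$ as the inner index) turns it into the double sum $\sum_{k=1}^{n}\binom{n}{k}(-1)^{k-1}\sum_{j=0}^{k-1}\binom{k}{j}\phi_{j}(x)$. Interchanging the order of summation and applying the subset-of-a-subset identity $\binom{n}{k}\binom{k}{j}=\binom{n}{j}\binom{n-j}{k-j}$ frees $\phi_{j}(x)$ from the inner index, leaving $\sum_{j=0}^{n-1}\binom{n}{j}\phi_{j}(x)\sum_{k=j+1}^{n}\binom{n-j}{k-j}(-1)^{k-1}$.

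The decisive step is the evaluation of the inner alternating sum. Re-indexing by $i=k-j$ and pulling out the sign $(-1)^{j-1}$, it becomes $(-1)^{j-1}\bigl(\sum_{i=0}^{n-j}\binom{n-j}{i}(-1)^{i}-1\bigr)$; since $j\leq n-1$, the complete alternating binomial sum $(1-1)^{n-j}$ vanishes, so the bracket equals $-1$ and the whole inner sum collapses to $(-1)^{j}$. What remains is exactly $\sum_{j=0}^{n-1}\binom{n}{j}(-1)^{j}\phi_{j}(x)$, the left-hand side of $(\ref{26})$.

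The main obstacle I anticipate is purely bookkeeping rather than conceptual: keeping the summation ranges and boundary terms honest. In particular I must check that the $k=n$ contribution is correctly excluded on the left (it is absent because $(\ast)$ only reaches $k-1$), that extending the right-hand sum down to $k=0$ is harmless (since $\phi_{0}^{\prime}(x)=0$), and that the extra $-1$ coming from the missing $i=0$ term is tracked correctly through the sign changes. A fully equivalent alternative, which avoids the double sum, is to note that the alternating binomial transform of $(\phi_{k}(x))$ has exponential generating function $A(-t,x)=e^{x(e^{-t}-1)}$; then both sides of $(\ref{26})$ can be shown to share the single exponential generating function $(e^{t}-1)e^{x(e^{-t}-1)}$, whence the identity follows by comparing coefficients of $t^{n}/n!$.
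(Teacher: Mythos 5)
Your proof is correct, but it follows a genuinely different route from the paper's. The paper's entire proof is the substitution of the recurrence $\left( \ref{25'}\right) $ into the inverse binomial relation $\left( \ref{24}\right) $: writing $\phi _{k+1}\left( x\right) =x\left( \phi _{k}\left( x\right) +\phi _{k}^{\prime }\left( x\right) \right) $ inside $x\phi _{n}\left( x\right) =\sum_{k=0}^{n}\binom{n}{k}\left( -1\right) ^{n-k}\phi _{k+1}\left( x\right) $, cancelling $x$, noting that the $k=n$ term of the $\phi _{k}$-sum cancels the left-hand side and that $\phi _{0}^{\prime }\left( x\right) =0$, and multiplying through by $\left( -1\right) ^{n}$ gives $\left( \ref{26}\right) $ at once. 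You never invoke $\left( \ref{24}\right) $; instead you extract the derivative formula $(\ast )$, $\phi _{n}^{\prime }\left( x\right) =\sum_{k=0}^{n-1}\binom{n}{k}\phi _{k}\left( x\right) $, which is indeed correctly read off from the generating-function identity in the proof of $\left( \ref{25'}\right) $ (it also follows by comparing $\left( \ref{23'}\right) $ with $\left( \ref{25'}\right) $), and then you perform the binomial inversion by hand: the double-sum interchange, the identity $\binom{n}{k}\binom{k}{j}=\binom{n}{j}\binom{n-j}{k-j}$, and the collapse of the alternating sum $\left( 1-1\right) ^{n-j}$ are all sound, including the boundary bookkeeping --- the constraint $j\leq n-1$ is exactly what forces the bracket to equal $-1$, and $j=n$ never occurs because $(\ast )$ stops at $k-1$. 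What the paper's route buys is brevity: everything needed is already on the table, so the proof is one sentence. What your route buys is independence from the alternating-sign relation $\left( \ref{24}\right) $ (only positive-coefficient identities are consumed) and transparency about the mechanism, since the corollary is revealed as nothing more than binomial inversion applied to $(\ast )$. Your closing generating-function remark is also correct: both sides of $\left( \ref{26}\right) $ have exponential generating function $\left( e^{t}-1\right) e^{x\left( e^{-t}-1\right) }$, which is arguably the cleanest argument of the three.
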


\begin{proof}
Employing $\left( \ref{25'}\right) $ in the equation $\left( \ref{24}\right) 
$, we obtain $\left( \ref{26}\right) $.
\end{proof}

\subsection{Results on geometric polynomials\ and numbers}

This part of our work contains some relations on geometric numbers and
polynomials, most of which seems to be new.

\subsubsection{Geometric numbers}

Let us set the initial sequence of an Euler-Seidel matrix as the sequence of
geometric numbers, i.e. $\left( a_{n}^{0}\right) _{n\geq 0}=\left(
F_{n}\right) _{n\geq 0}$. Then we have%
\begin{equation*}
\left[ 
\begin{array}{cccccc}
1 & 1 & 3 & 13 & 75 & \cdots \\ 
2 & 3 & 7 & 20 & \cdots &  \\ 
6 & 10 & 27 & \cdots &  &  \\ 
26 & 37 & \cdots &  &  &  \\ 
150 & \cdots &  &  &  &  \\ 
\cdots &  &  &  &  & 
\end{array}%
\right]
\end{equation*}%
Considering the first row and the first column, we observe that $%
a_{0}^{n}=2F_{n}$, $n\geq 1$. We need a proof of this fact.

Proposition $\left( \ref{P}\right) $ permits us to write%
\begin{equation*}
\overline{A}\left( t\right) =\sum_{n=0}^{\infty }a_{0}^{n}\frac{t^{n}}{n!}=%
\frac{e^{t}}{2-e^{t}}=2\frac{1}{2-e^{t}}-1=\sum_{n=1}^{\infty }2F_{n}\frac{%
t^{n}}{n!}+1\text{.}
\end{equation*}%
Now comparision of the coefficients of the both sides gives $%
a_{0}^{n}=2F_{n} $ where $n\geq 1$.

Then we may summarize the results so far obtained in:

\begin{proposition}
\begin{equation}
2F_{n}=\sum_{k=0}^{n}\binom{n}{k}F_{k}\text{ or equally }F_{n}=%
\sum_{k=0}^{n-1}\binom{n}{k}F_{k}  \label{27}
\end{equation}%
and%
\begin{equation}
F_{n}=2\sum_{k=0}^{n}\binom{n}{k}\left( -1\right) ^{n-k}F_{k}.  \label{28}
\end{equation}
\end{proposition}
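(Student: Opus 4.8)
The plan is to read off both identities from the binomial inverse pair $(\ref{2})$ and $(\ref{2+})$, using the generating-function computation already performed, which gives $a_0^n = 2F_n$ for $n \geq 1$ while $a_0^0 = F_0 = 1$. All the analytic content is exhausted once $\overline{A}(t) = e^t/(2-e^t)$ has been expanded; what remains is the combinatorial bookkeeping through $(\ref{2})$ and $(\ref{2+})$, in the spirit of the proof of Proposition $(\ref{B})$.

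First I would obtain $(\ref{27})$. Relation $(\ref{2})$ gives $a_0^n = \sum_{k=0}^n \binom{n}{k} a_k^0 = \sum_{k=0}^n \binom{n}{k} F_k$, and substituting $a_0^n = 2F_n$ (valid for $n \geq 1$) yields the first form $2F_n = \sum_{k=0}^n \binom{n}{k} F_k$. For the equivalent second form I would peel off the top index $k=n$, where $\binom{n}{n} F_n = F_n$, so that $2F_n - F_n = \sum_{k=0}^{n-1}\binom{n}{k} F_k$, that is $F_n = \sum_{k=0}^{n-1}\binom{n}{k} F_k$. This step is routine.

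For $(\ref{28})$ I would invoke the inverse relation $(\ref{2+})$, $F_n = a_n^0 = \sum_{k=0}^n \binom{n}{k}(-1)^{n-k} a_0^k$, and then insert the known values of $a_0^k$. The hard part will be the boundary term $k=0$. In the exponential case the identification $a_0^k = \phi_{k+1}$ held uniformly in $k$, so the inverse transform producing $(\ref{22})$ went through with no correction; here, by contrast, $a_0^k = 2F_k$ fails exactly at $k=0$, since $a_0^0 = F_0 = 1$ rather than $2F_0 = 2$. Substituting $a_0^k = 2F_k$ indiscriminately therefore does not reproduce $(\ref{28})$ verbatim: isolating the single anomalous term leaves a residual contribution of $(-1)^n$. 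I expect this boundary discrepancy to be the crux of the argument, and before committing to the clean form $(\ref{28})$ I would test it against the small cases $n=1,2$ (with $F_1=1$, $F_2=3$) to pin down whether a correction term must be carried along or the index range restricted.
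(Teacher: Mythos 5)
Your handling of $(\ref{27})$ coincides with the paper's own (implicit) argument---the paper states this proposition without a proof, merely ``summarizing'' the consequences of $(\ref{2})$ and $(\ref{2+})$---so that part is fine: $(\ref{2})$ plus $a_{0}^{n}=2F_{n}$ for $n\geq 1$ gives the first form, and peeling off the $k=n$ term gives the second, both valid for $n\geq 1$ (and only there, as you note).

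The real value of your proposal is the scruple you raise about $(\ref{28})$, and you should follow it to its conclusion: the identity $(\ref{28})$ as printed is false, precisely because of the boundary term you isolated. Running the test you suggest: for $n=1$ the right-hand side of $(\ref{28})$ equals $2\left( -F_{0}+F_{1}\right) =0\neq 1=F_{1}$, and for $n=2$ it equals $2\left( F_{0}-2F_{1}+F_{2}\right) =4\neq 3=F_{2}$. The paper substituted $a_{0}^{k}=2F_{k}$ for every $k$ in $(\ref{2+})$, overlooking that $a_{0}^{0}=F_{0}=1\neq 2F_{0}$; this is exactly the anomaly you flagged. The correct substitution gives
\begin{equation*}
F_{n}=\sum_{k=0}^{n}\binom{n}{k}\left( -1\right) ^{n-k}a_{0}^{k}=\left(
-1\right) ^{n}+2\sum_{k=1}^{n}\binom{n}{k}\left( -1\right) ^{n-k}F_{k},
\end{equation*}
that is,
\begin{equation*}
F_{n}+\left( -1\right) ^{n}=2\sum_{k=0}^{n}\binom{n}{k}\left( -1\right)
^{n-k}F_{k},
\end{equation*}
which does check out numerically ($n=2$: $3+1=2\left( 1-2+3\right)$; $n=3$: $13-1=2\left( -1+3-9+13\right)$). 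Neither restricting the index range nor any similar fix rescues the uncorrected form $(\ref{28})$, since the discrepancy $\left( -1\right) ^{n}$ never vanishes; the correction term must be carried. Your contrast with Proposition \ref{B} is also exactly the right diagnosis: there $a_{0}^{k}=\phi _{k+1}$ holds uniformly in $k\geq 0$, so the inverse transform $(\ref{22})$ needs no correction, whereas here the identification of $a_{0}^{k}$ breaks at $k=0$ and the paper's clean statement does not survive.
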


\subsubsection{Geometric polynomials}

Let us set the initial sequence of an Euler-Seidel matrix as the sequence of
geometric polynomials, i.e. $\left( a_{n}^{0}\right) _{n\geq 0}=\left(
F_{n}\left( x\right) \right) _{n\geq 0}$. Thus we obtain from $\left( \ref{4}%
\right) $ the result%
\begin{equation*}
A\left( t,x\right) =\sum_{n=0}^{\infty }F_{n}\left( x\right) \frac{t^{n}}{n!}%
=\frac{1}{1-x\left( e^{t}-1\right) }
\end{equation*}%
and%
\begin{equation}
\overline{A}\left( t,x\right) =\frac{e^{t}}{1-x\left( e^{t}-1\right) }.
\label{28'}
\end{equation}%
Then, derivative respect to the $t$ yields%
\begin{equation*}
\overline{A}\left( t,x\right) =\left[ \frac{1}{x}-\left( e^{t}-1\right) %
\right] \frac{d}{dt}A\left( t,x\right)
\end{equation*}%
which can equally well be written%
\begin{equation*}
\overline{A}\left( t,x\right) =\sum_{n=0}^{\infty }\left[ \frac{%
F_{n+1}\left( x\right) }{x}+F_{n+1}\left( x\right) -\sum_{k=0}^{n}\binom{n}{k%
}F_{k+1}\left( x\right) \right] \frac{t^{n}}{n!}.
\end{equation*}%
The next equation follows by equating coefficients of $\frac{t^{n}}{n!}$\ in
the preceding equation,%
\begin{equation}
a_{0}^{n}=\frac{F_{n+1}\left( x\right) }{x}-\sum_{k=1}^{n}\binom{n}{k-1}%
F_{k}\left( x\right) .  \label{29}
\end{equation}%
We can now prove:

\begin{proposition}
\label{F1}$F_{n}\left( x\right) $ geometric polynomials satisfy the
following recurrence relation%
\begin{equation}
F_{n}\left( x\right) =x\sum_{k=0}^{n-1}\binom{n}{k}F_{k}\left( x\right) .
\label{30}
\end{equation}
\end{proposition}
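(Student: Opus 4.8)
The plan is to produce a second expression for the first-column entry $a_0^n$ directly from the defining relation of the Euler-Seidel matrix, and then to equate it with the expression already established in $\left( \ref{29}\right) $.

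First I would apply relation $\left( \ref{2}\right) $, which for the initial sequence $\left( a_n^0\right) _{n\geq 0}=\left( F_n\left( x\right) \right) _{n\geq 0}$ gives
\[
a_0^n=\sum_{k=0}^n\binom{n}{k}F_k\left( x\right) .
\]
Equating this with the right-hand side of $\left( \ref{29}\right) $ and solving for $F_{n+1}\left( x\right) /x$ yields
\[
\frac{F_{n+1}\left( x\right) }{x}=\sum_{k=0}^n\binom{n}{k}F_k\left( x\right) +\sum_{k=1}^n\binom{n}{k-1}F_k\left( x\right) .
\]

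Next I would merge the two sums. Isolating the $k=0$ term of the first sum (the second sum contributes nothing at $k=0$) and combining the coefficients of $F_k\left( x\right) $ for $1\leq k\leq n$ by Pascal's rule $\binom{n}{k}+\binom{n}{k-1}=\binom{n+1}{k}$, together with $\binom{n}{0}=\binom{n+1}{0}=1$, collapses the right-hand side to
\[
\frac{F_{n+1}\left( x\right) }{x}=\sum_{k=0}^n\binom{n+1}{k}F_k\left( x\right) .
\]
Multiplying by $x$ and then replacing $n+1$ by $n$ (equivalently lowering the summation range by one) produces exactly $\left( \ref{30}\right) $.

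The argument is essentially mechanical. The only point demanding attention is the reindexing of the shifted sum $\sum_{k=1}^n\binom{n}{k-1}F_k\left( x\right) $ so that, term by term, its summands line up with those of $\sum_{k=0}^n\binom{n}{k}F_k\left( x\right) $ before Pascal's rule may be invoked; once the ranges are matched there is no further obstacle.
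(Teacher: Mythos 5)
Your proposal is correct and takes essentially the same route as the paper: the paper's own proof of Proposition \ref{F1} is just the terse remark that $\left( \ref{2}\right)$ combined with $\left( \ref{29}\right)$ yields $\left( \ref{30}\right)$, and your write-up simply fills in the implicit algebra (equating the two expressions for $a_{0}^{n}$, merging the sums by Pascal's rule, and shifting the index). There is no gap; you have merely made explicit what the paper leaves to the reader.
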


\begin{proof}
In view of $\left( \ref{2}\right) $,\ equation $\left( \ref{29}\right) $\
shows validity of $\left( \ref{30}\right) $.
\end{proof}

\begin{remark}
As a special case, we get $\left( \ref{27}\right) $ by setting $x=1$ in $%
\left( \ref{30}\right) $.
\end{remark}

\begin{corollary}
\begin{equation}
F_{n+1}\left( x\right) =\frac{x}{1+x}\sum_{k=0}^{n}\binom{n}{k}\left[
F_{k}\left( x\right) +F_{k+1}\left( x\right) \right] .  \label{31}
\end{equation}
\end{corollary}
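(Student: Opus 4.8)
The plan is to read off $\left( \ref{31}\right) $ as an immediate consequence of the recurrence $\left( \ref{30}\right) $ in Proposition $\left( \ref{F1}\right) $, using nothing more than Pascal's rule and one index shift. First I would write $\left( \ref{30}\right) $ with $n$ replaced by $n+1$, that is $F_{n+1}\left( x\right) =x\sum_{k=0}^{n}\binom{n+1}{k}F_{k}\left( x\right) $, and then expand $\binom{n+1}{k}=\binom{n}{k}+\binom{n}{k-1}$ to break the sum into $x\sum_{k=0}^{n}\binom{n}{k}F_{k}\left( x\right) $ and $x\sum_{k=0}^{n}\binom{n}{k-1}F_{k}\left( x\right) $. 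In the second piece the substitution $j=k-1$ (with $\binom{n}{-1}=0$) rewrites it as $x\sum_{j=0}^{n-1}\binom{n}{j}F_{j+1}\left( x\right) $, so that the shifted polynomials $F_{k+1}\left( x\right) $ enter.

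The crucial step is to recognize $\sum_{j=0}^{n-1}\binom{n}{j}F_{j+1}\left( x\right) $ as $\sum_{k=0}^{n}\binom{n}{k}F_{k+1}\left( x\right) $ with its top term $\binom{n}{n}F_{n+1}\left( x\right) =F_{n+1}\left( x\right) $ deleted. Substituting this back and transferring the resulting $-xF_{n+1}\left( x\right) $ to the left-hand side turns the left-hand side into $\left( 1+x\right) F_{n+1}\left( x\right) $, while the right-hand side becomes $x\sum_{k=0}^{n}\binom{n}{k}\left[ F_{k}\left( x\right) +F_{k+1}\left( x\right) \right] $; dividing by $1+x$ yields $\left( \ref{31}\right) $. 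I expect this boundary bookkeeping --- isolating exactly the $k=n$ term so that the two copies of $F_{n+1}\left( x\right) $ coalesce into the factor $\frac{1+x}{x}$ --- to be the only place where a slip is possible.

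As an independent check I would verify the equivalent generating-function identity. With $A\left( t,x\right) =\left[ 1-x\left( e^{t}-1\right) \right] ^{-1}$ one has $A^{\prime }\left( t,x\right) =\sum_{n\geq 0}F_{n+1}\left( x\right) t^{n}/n!$, and by the binomial-transform relation $\left( \ref{4}\right) $ the sequence $\sum_{k=0}^{n}\binom{n}{k}\left[ F_{k}\left( x\right) +F_{k+1}\left( x\right) \right] $ has exponential generating function $e^{t}\left( A\left( t,x\right) +A^{\prime }\left( t,x\right) \right) $. Using $A^{\prime }\left( t,x\right) =xe^{t}A\left( t,x\right) ^{2}$ together with $1+xe^{t}A\left( t,x\right) =\left( 1+x\right) A\left( t,x\right) $, this collapses to $\frac{1+x}{x}A^{\prime }\left( t,x\right) $, which confirms $\left( \ref{31}\right) $ by comparison of coefficients of $t^{n}/n!$.
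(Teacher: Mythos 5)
Your proof is correct and takes exactly the route the paper intends: the paper states this corollary without an explicit proof, immediately after Proposition \ref{F1}, and your derivation from $\left( \ref{30}\right)$ via Pascal's rule, the index shift, and absorption of the $k=n$ term into the left-hand side is precisely that implied argument (your bookkeeping at the boundary term is right). The generating-function verification is a correct bonus check, consistent with the paper's own methods.
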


Now we give two more recurrence relations for geometric polynomials.

\begin{proposition}
\label{F2}Let $F_{k}^{\prime }\left( x\right) $ denotes the first derivative
of $F_{k}\left( x\right) ,$ respect to the variable $x$. Then we have%
\begin{equation}
F_{n+1}\left( x\right) =x\sum_{k=0}^{n}\binom{n}{k}\left[ F_{k}\left(
x\right) +xF_{k}^{\prime }\left( x\right) \right] .  \label{32}
\end{equation}
\end{proposition}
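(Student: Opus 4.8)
The plan is to reproduce the generating-function argument used for the exponential analogue (\ref{25'}), now applied to a fresh Euler-Seidel matrix whose initial sequence records the combination $F_{n}(x)+xF_{n}^{\prime }(x)$ appearing on the right of (\ref{32}). First I would compute the exponential generating function of this sequence. Differentiating (\ref{15}) with respect to $x$ gives
\[
\sum_{n=0}^{\infty }F_{n}^{\prime }(x)\frac{t^{n}}{n!}=\frac{e^{t}-1}{\left( 1-x(e^{t}-1)\right) ^{2}},
\]
and adding $x$ times this to (\ref{15}) collapses, since $\left(1-x(e^{t}-1)\right)+x(e^{t}-1)=1$, to the clean square
\[
B(t,x):=\sum_{n=0}^{\infty }\left[ F_{n}(x)+xF_{n}^{\prime }(x)\right] \frac{t^{n}}{n!}=\frac{1}{\left( 1-x(e^{t}-1)\right) ^{2}}=A(t,x)^{2}.
\]

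Next I would take $\left( F_{n}(x)+xF_{n}^{\prime }(x)\right) _{n\geq 0}$ as the initial row of an Euler-Seidel matrix and invoke Seidel's proposition in its two-variable form (\ref{16'}). Its first column is the binomial transform
\[
b_{0}^{n}=\sum_{k=0}^{n}\binom{n}{k}\left[ F_{k}(x)+xF_{k}^{\prime }(x)\right] ,
\]
which is exactly the sum in (\ref{32}) divided by $x$, and its generating function is $\overline{B}(t,x)=e^{t}B(t,x)=e^{t}A(t,x)^{2}=\dfrac{e^{t}}{\left( 1-x(e^{t}-1)\right) ^{2}}$.

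Finally I would connect this with $F_{n+1}(x)$ by differentiating $A(t,x)$ with respect to $t$:
\[
\sum_{n=0}^{\infty }F_{n+1}(x)\frac{t^{n}}{n!}=\frac{d}{dt}A(t,x)=\frac{xe^{t}}{\left( 1-x(e^{t}-1)\right) ^{2}}=x\,\overline{B}(t,x).
\]
Comparing coefficients of $t^{n}/n!$ then yields $F_{n+1}(x)=x\,b_{0}^{n}$, which is precisely (\ref{32}). The only step that is not pure bookkeeping is the middle one: recognizing that the generating function of $F_{n}(x)+xF_{n}^{\prime }(x)$ is the square $A(t,x)^{2}$. Once that algebraic collapse is in hand, Seidel's proposition and the single $t$-derivative of $A(t,x)$ finish the argument immediately.
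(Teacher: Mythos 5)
Your proof is correct, but it follows a genuinely different route from the paper's. The paper never builds an Euler--Seidel matrix for this proposition at all: it transfers the identity from the exponential-polynomial side, combining the integral representation $\left( \ref{16}\right) $, $F_{n}\left( x\right) =\int_{0}^{\infty }\phi _{n}\left( x\lambda \right) e^{-\lambda }d\lambda $, with the previously established recurrence $\left( \ref{23'}\right) $ to write $F_{n+1}\left( x\right) =x\sum_{k=0}^{n}\binom{n}{k}\int_{0}^{\infty }\phi _{k}\left( x\lambda \right) \lambda e^{-\lambda }d\lambda $, then evaluates these integrals through the Stirling expansion of $\phi _{k}$ (producing coefficients $\left( r+1\right) !x^{r}$), and finally recognizes the outcome as $x\frac{d}{dx}\left[ x\sum_{k=0}^{n}\binom{n}{k}F_{k}\left( x\right) \right] $, which is $\left( \ref{32}\right) $. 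You instead stay entirely inside the geometric polynomials: your key observation is the algebraic collapse showing that the exponential generating function of $F_{n}\left( x\right) +xF_{n}^{\prime }\left( x\right) $ is the square $A\left( t,x\right) ^{2}=\left( 1-x\left( e^{t}-1\right) \right) ^{-2}$, after which Seidel's relation $\left( \ref{16'}\right) $ applied to a fresh Euler--Seidel matrix with this initial row, together with $\frac{d}{dt}A\left( t,x\right) =xe^{t}A\left( t,x\right) ^{2}$, yields $\left( \ref{32}\right) $ by comparing coefficients; all three of these computations check out. Each approach buys something: the paper's proof showcases how identities migrate from $\phi _{n}$ to $F_{n}$ through the Laplace-type integral $\left( \ref{16}\right) $ and recycles a result already proved by the matrix method, whereas yours is self-contained, needs neither exponential polynomials nor the integral, and is arguably closer in spirit to the Euler--Seidel machinery the paper advertises, since it exhibits $\left( \ref{32}\right) $ directly as a first-row/first-column relation of an explicit Euler--Seidel matrix.
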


\begin{proof}
We may use $\left( \ref{16}\right) $ and $\left( \ref{23'}\right) $ to
conclude that,%
\begin{equation*}
F_{n+1}\left( x\right) =x\sum_{k=0}^{n}\binom{n}{k}\int_{0}^{\infty }\phi
_{k}\left( x\lambda \right) \lambda e^{-\lambda }d\lambda
\end{equation*}%
from which it follows that%
\begin{equation*}
F_{n+1}\left( x\right) =x\sum_{k=0}^{n}\binom{n}{k}\sum_{r=0}^{k}%
\begin{Bmatrix}
k \\ 
r%
\end{Bmatrix}%
\left( r+1\right) !x^{r}.
\end{equation*}%
This can equally well be written by means of derivative as%
\begin{equation*}
F_{n+1}\left( x\right) =x\frac{d}{dx}x\sum_{k=0}^{n}\binom{n}{k}F_{k}\left(
x\right)
\end{equation*}%
which completes the proof.
\end{proof}

We have the following symmetric relations between geometric polynomials and
their derivatives.

\begin{corollary}
\begin{equation}
\sum_{k=0}^{n}\binom{n}{k}xF_{k}^{\prime }\left( x\right) =\sum_{k=1}^{n}%
\binom{n}{k-1}F_{k}\left( x\right)  \label{33}
\end{equation}
\end{corollary}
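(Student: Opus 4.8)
The plan is to derive $\left( \ref{33}\right) $ by comparing the two expressions we now have for $F_{n+1}\left( x\right) $: the recurrence $\left( \ref{32}\right) $ just established in Proposition \ref{F2}, and the recurrence $\left( \ref{30}\right) $ of Proposition \ref{F1} read at index $n+1$. Both present $F_{n+1}\left( x\right) $ as a binomial sum over the $F_{k}\left( x\right) $, the only difference being the extra derivative term carried by $\left( \ref{32}\right) $; so equating them should strip away everything except the derivative sum on one side and a shifted binomial sum on the other.

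First I would write $\left( \ref{32}\right) $ out in expanded form,
\[
F_{n+1}\left( x\right) =x\sum_{k=0}^{n}\binom{n}{k}F_{k}\left( x\right) +x^{2}\sum_{k=0}^{n}\binom{n}{k}F_{k}^{\prime }\left( x\right) ,
\]
by distributing the bracket in $\left( \ref{32}\right) $. Next I would apply $\left( \ref{30}\right) $ with $n$ replaced by $n+1$, namely
\[
F_{n+1}\left( x\right) =x\sum_{k=0}^{n}\binom{n+1}{k}F_{k}\left( x\right) .
\]
Equating the two right-hand sides and cancelling the common factor $x$, I would then expand $\binom{n+1}{k}=\binom{n}{k}+\binom{n}{k-1}$ by Pascal's rule. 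The term $\sum_{k=0}^{n}\binom{n}{k}F_{k}\left( x\right) $ appears on both sides and cancels, leaving
\[
x\sum_{k=0}^{n}\binom{n}{k}F_{k}^{\prime }\left( x\right) =\sum_{k=0}^{n}\binom{n}{k-1}F_{k}\left( x\right) .
\]
Since $\binom{n}{k-1}=0$ at $k=0$, the right-hand sum may be started at $k=1$, which is exactly $\left( \ref{33}\right) $.

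I do not anticipate a real obstacle; the one place needing care is the index bookkeeping — applying $\left( \ref{30}\right) $ at $n+1$ rather than $n$, and orienting the Pascal split so that the plain binomial terms cancel instead of doubling. As a cross-check, one could instead equate the two expressions $\left( \ref{2}\right) $ and $\left( \ref{29}\right) $ for $a_{0}^{n}$ and substitute $\left( \ref{32}\right) $ for $F_{n+1}\left( x\right) $; the same cancellation occurs and reproduces $\left( \ref{33}\right) $.
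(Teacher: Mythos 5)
Your proposal is correct and matches the paper's own argument: the paper's proof of $\left( \ref{33}\right)$ consists precisely of combining Proposition \ref{F1} (equation $\left( \ref{30}\right)$, read at index $n+1$) with Proposition \ref{F2} (equation $\left( \ref{32}\right)$), which is exactly your computation. You have simply made explicit the details the paper leaves implicit — the Pascal expansion $\binom{n+1}{k}=\binom{n}{k}+\binom{n}{k-1}$ and the cancellation of the common sum $\sum_{k=0}^{n}\binom{n}{k}F_{k}\left( x\right)$ — and your index bookkeeping is sound.
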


\begin{proof}
Combining results of Proposition $\left( \ref{F1}\right) $ and Proposition $%
\left( \ref{F2}\right) $ gives $\left( \ref{33}\right) .$
\end{proof}

\end{document}